\tikzstyle{line} = [draw, -latex']
\tikzstyle{decision} = [diamond, draw, fill=white, 
\tikzstyle{block} = [rectangle, draw, fill=blue!20, 
\tikzstyle{blank} = [rectangle, fill=white, 
\tikzstyle{line} = [draw, -latex']
\tikzstyle{vague} = [draw, -latex']
\tikzstyle{cloud} = [draw, ellipse,fill=white, node distance=3cm,
\definecolor{Blu}{rgb}{.255,.41,.884} 
\newtheorem{theorem}{Theorem}[section]
\newtheorem{assumption}{Assumption}[section]
\newtheorem{proposition}{Proposition}[section]
\newcommand{\R}{\mathbb{R}}
\newcommand{\stt}{\text{ s.t. }}
\newcommand{\from}{\mathrel{\mathop:}=}
\newcommand{\sumgt}{\displaystyle\sum_{g \in G}\displaystyle\sum_{t \in T}}
\newcommand{\sumkt}{\displaystyle\sum_{k \in G}\displaystyle\sum_{t \in T}}
\newcommand{\sumt}{\displaystyle\sum_{t \in T}}
\newcommand{\sumg}{\displaystyle\sum_{g \in G}}
\newcommand{\sumk}{\displaystyle\sum_{k \in G}}
\begin{document}

\title{Transmission-Constrained Unit Commitment}

\author{Claudio Gambella, Jakub Marecek, Martin Mevissen, \\ Jose Maria Fernandez Ortega, Sara Pezic Djukic, Mustafa Pezic}
\maketitle
\thispagestyle{empty}
\pagestyle{empty}

\begin{abstract}
The unit commitment with transmission constraints in the alternating-current (AC) model
is a challenging mixed-integer non-linear optimisation problem. 
We present an approach based on decomposition of a 
Mixed-Integer Semidefinite Programming (MISDP) problem into a mixed-integer quadratic (MIQP) master problem 
and a semidefinite programming (SDP) sub-problem.
Between the master problem and the sub-problem, we pass novel classes of cuts.
We analyse finite convergence to the optimum of the MISDP and 
report promising computational results on a test case from the Canary Islands, Spain.
\end{abstract}

\section{Introduction}

In both traditional power systems and modern microgrids, a single operator decides what generating units to turn on and off and when. 
This decision, known as the unit commitment \cite{8275570}, is presently either divorced from the feasibility of the power flows,
 or considers only a crude linear approximation of the non-linear power flows. 
Once the unit commitment is obtained, its feasibility with respect to 
 the transmission constraints has to be tested. 
Should no feasible solutions be available, manual modifications of the optimal schedule are needed.

In relation to the introduction of intermittent renewable energy sources and the consequent 
rapid changes in the (directions of) power flows, 
there has been much interest in making decisions concerning generation while considering transmission constraints explicitly.
The prototypical problem in this field, which is known variously as the Transmission-Constrained Unit Commitment (TCUC) 
in the Alternating-Current (AC) model, 
the Network-Constrained Unit Commitment (NCUC), 
or the Unit Commitment with Optimal Power Flows (UC+OPF),
can be cast as a large Mixed-Integer Non-Linear Programming (MINLP) problem.

The state-of-the-art approaches to this MINLP are based on generalized Benders' decompositions \cite{Geo72}.
There, the MINLP is decomposed into the mixed-integer linear programming (MILP) part 
and the continuous non-linear programming (NLP) part, but one cannot guarantee convergence to the global optimum.
The MILP part is known as the master problem and the NLP part is known as the sub-problem.
Several papers consider active power flow and reactive power flow in isolation 
\cite{761909,1490608,1425598,4349079,5372005,7061968}, which makes it possible to consider linear programming
(LP) in the sub-problem and LP duality in deriving no-good cuts.
In hydro-power scheduling \cite{6919349,6709763}, Semidefinite Programming (SDP) has been used in the sub-problem.
More recently, Castillo et al. \cite{7377125} considered Benders' decomposition with non-linear sub-problems,
solved either by piece-wise linearisation or using NLP solvers with local convergence.
Despite this long history of research, this approach has not allowed for convergence guarantees, yet. 

We present a principled approach, which decomposes a mixed-integer semidefinite-programming problem (referred to as TCUC-SDP in Figure \ref{fig-TCUC-models})
into a mixed-integer quadratic programming (MIQP) master problem 
and SDP sub-problems, wherein values of discrete variables are fixed.
Whenever a candidate solution is found in the master problem, 
the SDP sub-problem is solved.
When the sub-problem turns out to be feasible, it completes the candidate solution and provides an upper bound 
on the optimum of the TCUC-SDP 
and the cost of non-revenue power for each period of the TCUC-SDP.
The cost can then be added to the objective of the master problem, whenever the same per-period unit commitment 
is considered again.
When the sub-problem is infeasible, a corresponding part of the feasible region of the master problem should be removed.
To pass these details from the SDP sub-problem back to the master problem, we derive several novel classes of cuts.


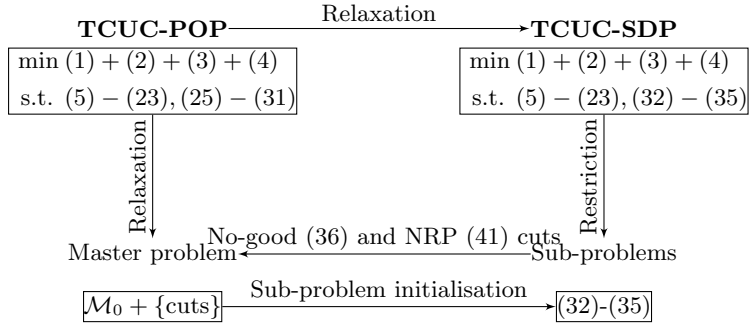
\begin{figure}[htb]
	\begin{footnotesize}
		\begin{tikzpicture}[inner sep=0pt,minimum size=4mm]]
		
		\node[draw,shape=rectangle] (TCUC-POP) at (0,3) {
			$\begin{aligned}
			\min \: & \eqref{CoperP_noCommitmentCost}+\eqref{CoperP_CommitmentCost}+\eqref{Carr0}+\eqref{Carr} \\
			\stt& \eqref{LoadBalance}-\eqref{varR}, \eqref{defPgTrans}-\eqref{defPgTransLast}
			\end{aligned}$
		};
		\node[draw=none, fill=none] (TCUC-POP-label) at (0,3.7) {\textbf{TCUC-POP}};

		\node[draw,shape=rectangle] (TCUC-SDP) at (6,3) {
	$\begin{aligned}
	\min \: & \eqref{CoperP_noCommitmentCost}+\eqref{CoperP_CommitmentCost}+\eqref{Carr0}+\eqref{Carr} \\
	\stt& \eqref{LoadBalance}-\eqref{varR}, \eqref{defA}-\eqref{varBk}
	\end{aligned}$
};
\node[draw=none, fill=none] (TCUC-SDP-label) at (6,3.7) {\textbf{TCUC-SDP}};

		\path [line] (TCUC-POP-label) -- (TCUC-SDP-label) node[pos=0.5,above] {Relaxation}; 

		\node[draw,shape=rectangle] (master) at (0,0) {
		$\mathcal{M}_0 + \{\text{cuts}\}$
		};
		\node[draw=none, fill=none] (master-label) at (0,0.7) {Master problem};
		
		\path [line] (TCUC-POP) -- (master-label) node[pos=0.5,above,rotate=+90] {Relaxation};
		
		\node[draw,shape=rectangle] (sub-problem) at (6,0) {
		\eqref{defA}-\eqref{varBk}
		};
		\node[draw=none, fill=none] (sub-problem-label) at (6,0.7) {Sub-problems}; 
		
		\path [line] (TCUC-SDP) -- (sub-problem-label) node[pos=0.5,above,rotate=+90] {Restriction};
		
		\path [line] (sub-problem-label) -- (master-label) node[pos=0.5,above] {No-good \eqref{nogood} and NRP \eqref{NRP-cut} cuts};
		\path [line] (master) -- (sub-problem) node[pos=0.5,above] {Sub-problem initialisation};

		\end{tikzpicture}
	\end{footnotesize}
	\caption{Our approach illustrated.}\label{fig-TCUC-models}
\end{figure}


After we introduce the requisite notation, we formalise the approach in Section
\ref{sec:Decomposition} and prove its convergence in Section \ref{sec:Convergence}.
Subsequently, we present the results obtained with the approach on test instances from the Canary Islands.

\section{Problem Formulation}
\label{sec:prob}

We consider the representation of a power system used by \cite{lavaei2012zero,Molzahn2011,GhaMM16} and the corresponding notation.  
The power system network is represented by a directed graph, where each vertex $k \in N$ is called a ``bus'' and each directed edge 
$(l, m) \in E \subseteq N \times N$ is called a ``branch'', and each branch
can have an ideal phase-shifting transformer at its ``from'' end and is modeled as a $\Pi$-equivalent circuit. 
Let $G \subseteq N$ be the set of generators, also known as PV buses. Let $L \subseteq E$ be the set of branches on which a limitation on the generation of the thermal power is present.
The remainder of vertices $N \setminus G$ represent the PD buses.
We discretise time to a set of time periods $T$, starting from $T_i$ and ending in $T_f$.

Using the usual rectangular power-voltage formulation of power flows in each period, the decision variables are:\\
\begin{tabular}{cp{0.5\linewidth}}
  $u_k^t$ &	 commitment status of generator $k \in G$ at time $t \in T$,\\
  $R_{+}^{t}$, $R_{-}^{t}$ & spinning up/down-reserve at time $t$,\\
    $x^t=\{\Re{V_{k}^{t}}+ j \Im{V_{k}^{t}}\}_{k \in N}$ & vector of voltages $V_{k}^{t}$ at time $t \in T$,\\
  $ (P^{g,t}_k, Q^{g,t}_k)$ & active and reactive power of the generator at bus $k \in N$ at time $t \in T$,\\
  $ (P_{lm}^{t}, Q_{lm}^{t})$ & active and reactive power flow on $(l,m) \in E$ at time $t \in T$,
\end{tabular}

wherein  $S_{k}^{g,t}$, $S_{lm}^{t}$ are not used explicitly, but rather expressed in terms of their components and $x^t$ in all actual computations. 
For notational convenience, we use $u=\{u_k^t: k \in G, t \in T\}$ and $P = \{P_k^{g,t}: k \in G, t \in T\}$, respectively, for the status and active power of all generators during the time periods in $T$.

To complete the formulation, we need to introduce a number of power-flow-related constants:\\
\begin{tabular}{cp{0.6\linewidth}}
$y \in \mathbb{R}^{|N|\times|N|}$ & network admittance matrix\\
 $\bar{b}_{lm}$ & shunt element value at branch $(l,m) \in E $\\
 $g_{lm}+jb_{lm}$ & the series admittance on a branch $(l,m) \in E$\\
 $P^{d,t}_k$ & active load (demand) at bus $k \in N$ on period $t$\\
  $Q^{d,t}_k$ & reactive load (demand) at bus $k \in N$ on period $t$\\
   $P^{d,t}$ &  aggregate active demand on period $t$\\
  $c_k^2, c_k^1, c_k^0, $ & coefficients of the quadratic generation costs $C_k$ at generator $k$\\
   $P_k^{\min, t}$, $P_k^{\max, t}$ & limits on active generation at bus $k$ on period $t$\\
  $Q_k^{\min, t}$, $Q_k^{\max, t}$ & limits on reactive generation at bus $k$ on period $t$\\
  $V_k^{\min}$, $V_k^{\max}$ & limits on the absolute value of the voltage at bus $k  \in N$\\
  $S_{lm}^{\max}$ & limit on the absolute value of the apparent power of branch $(l,m) \in L$,
\end{tabular}\\\vspace{0.2cm}

and a number of unit-commitment-related constants:\\\vspace{0.2cm}
\begin{tabular}{cp{0.6\linewidth}}
$u_k^{\textrm{Carr}}$ & start-up cost of generator $k$\\ 
$u_k^{\textrm{Init}}$ & initial status of generator $k$\\ 
$u_k^{\textrm{InitP}}$ & initial power of generator $k$\\ 
$u_k^{\textrm{Inertia}}$ & flag indicating whether generator $k$ provides inertia\\ 
$u_k^{\textrm{RampUp}}$ & ramp-up value of generator $k$\\
$u_k^{\textrm{minOFF}}$ & minimum number of periods of inactivity for generator $k$ \\
$u_k^{\textrm{minON}}$ & minimum number of periods of activity for generator $k$ \\
$u_k^{\textrm{InitT}}$ & initial period of generator $k$ \\ 
$\overline{R_k^{t}}$ & maximum limit for reserve at generator $k$ on period $t$\\ 
$\underline{R_k^{t}}$ & minimum limit for reserve at generator $k$ on period \\ $t$ 
$\mathcal{R}_{+}^{t}$  & up secondary reserve on period $t$ 
\end{tabular}\\
\begin{tabular}{cp{0.6\linewidth}}
$\mathcal{R}_{-}^{t}$  & down secondary reserve on period $t$\\
$\underline{U^+}$ & minimum number of generators on\\
$P_{\%}$ & maximum power in demand percentage\\
$C_{\textrm{OperP}}$ & operational costs of generators units\\
$C_{\textrm{Arr}}$ & start-up costs of generator units,
\end{tabular}

and the following sets of indices:
\begin{align*}
\begin{split}
	\textrm{Ramp}_{T_i}=\{&k \in G: u_k^{\textrm{RampUp}}<(P_k^{\max}-P_k^{\min}),\\
	& u_k^{\textrm{Init}}=1\},\\
	\textrm{Ramp}=\{&t \in T \setminus T_i, k \in G:\\&u_k^{\textrm{RampUp}}<(P_k^{\max}-P_k^{\min})\},\\
	\textrm{MinOff}_{\textrm{Init}0}=\bigl\{& k \in G: (u_k^{\textrm{minOFF}}>1,\\& u_k^{\textrm{minOFF}}-u_k^{\textrm{InitT}}>0, u_k^{\textrm{Init}}=0), \\
	&s \in \{T_i,\dots, T_i+u_k^{\textrm{minOFF}}-u_k^{\textrm{InitT}}-1\}\bigr\}\\
	\textrm{MinOff}_{\textrm{Init}1}=\bigl\{& k \in G: (u_k^{\textrm{minOFF}}>1, u_k^{\textrm{Init}}=1),\nonumber\\ 
	& s \in \{T_i+1,\dots ,T_i+u_k^{\textrm{minOFF}}-1 \}\bigr\}   \\
	\textrm{MinOff}=\bigl\{& k \in G: u_k^{\textrm{minOFF}}>1, t \in T\setminus T_i\nonumber\\
	& s \in \{t+1,\dots ,t+u_k^{\textrm{minOFF}}-1\}\bigr\} \\
	\textrm{MinOn}_{\textrm{Init}1}=&\bigl\{k \in G: (u_k^{\textrm{minON}}>1,\\&u_k^{\textrm{minON}}-u_k^{\textrm{Init}T}>0, u_k^{\textrm{Init}}=1),\\
	&s \in \{T_i, \dots,  T_i+u_k^{\textrm{minOFF}}-u_k^{\textrm{Init}T}-1\}\bigr\}\\
	\textrm{MinOn}_{\textrm{Init}0}=&\bigl\{k \in G: (u_k^{\textrm{minON}}>1, u_k^{\textrm{Init}}=0),\\
	& s \in \{T_i+1,\dots ,T_i+u_k^{\textrm{minON}}-1\}\bigr\}\\
	\textrm{MinOn}=&\bigl\{k \in G: u_k^{\textrm{minON}}>1, t \in T\setminus T_i,\\
	& s \in \{t+1,\dots ,t+u_k^{\textrm{minON}}-1\}\bigr\}
\end{split}
\end{align*}

The complete formulation as a Polynomial Optimisation Problem (POP) is as follows:

\begin{align}
\text{TCUC-POP}: \min \: & \sumkt (c_k^2 (P^{g,t}_k)^2 + c_k^1 P^{g,t}_k) & \label{CoperP_noCommitmentCost}  \\
&+ \sumkt  c_k^0 \ u_k^t & \label{CoperP_CommitmentCost}\\
&+\sumgt u_k^{\textrm{Carr}} \cdot \max(u_k^t-u_k^{t-1}, 0) & \label{Carr0}\\
&+\displaystyle\sum_{k \in G}  u_k^{\textrm{Carr}} \cdot \max(u_k^1-u_k^{\textrm{Init}}, 0) & \label{Carr} 
\end{align}
\begin{align}
\stt & \sumk P_k^{g,t} = P_{}^{g, t} &t \in T\label{LoadBalance}\\ 
& R_{+}^{t} = \sumk (\overline{R_k^{t}} u_k^t -P_k^{g,t}) &t \in T\label{UpReserve} \\ 
& R_{+}^{t} \geq \mathcal{R}_{+}^{t} & t \in T\label{MinUpReserve}\\
& R_{-}^{t} = \sumk (P_k^{g,t}-\underline{R_k^{t}} u_k^t)  &t \in T\label{DownReserve}\\
& R_{-}^{t} \geq \mathcal{R}_{-}^{t}  &t \in T\label{MinDownReserve}\\
& P_k^{g,t} \geq P_k^{\min, t} u_g^t &t \in T, k \in G\label{Pmin}\\
&P_k^{g,t} \leq P_k^{\max} u_g^t &t \in T, k \in G\label{Pmax}\\
& P_k^{g,t}\leq P_{\%} \cdot P ^{d,t} u_g^t  &t \in T, k \in G\label{Ppercentage}\\
&P_k^{T_i}-u_k^{\textrm{InitP}}\leq u_k^{\textrm{RampUp}} &k \in \textrm{Ramp}_{T_i} \label{Rampa_Sub_1}\\
&P_k^{t}-P_k^{t-1}\leq u_k^{\textrm{RampUp}}&k \in \textrm{Ramp}_{T_i}\label{Rampa_Sub}\\
&\sumg u_g^{\textrm{Inertia}} u_g^t  \geq \underline{U^+} &t \in T\label{MinUON}\\
&u_k^s=0 &(k,s) \in \textrm{MinOff}_{\textrm{Init}0}\label{MinOff1}\\
&u_k^{\textrm{Init}}-u_k^{T_i}\leq1-u_k^s &(k,s) \in \textrm{MinOff}_{\textrm{Init}1}\label{MinOff2}\\
& u_k^{t-1}-u_k^{t}\leq1-u_k^s & (k,t,s) \in \textrm{MinOff}\label{MinOff}\\
&u_k^s=1 &(k,s)\in \textrm{MinOn}_{\textrm{Init}1} \label{MinOn1}\\
&u_k^{T_i}-u_k^{\textrm{Init}}\leq u_k^s &(k,s)\in \textrm{MinOn}_{\textrm{Init}0} \label{MinOn2}\\
& u_k^{t}-u_k^{t-1}\leq u_k^s &(k,t,s) \in \textrm{MinOn}\label{MinOn}\\
& P_k^{g,t}\geq 0, u_k^{t}\in \{0,1\}&k \in G, t \in T\label{varPu}\\
& R_+^{t}\geq 0,  R_-^{t}\geq 0  &t \in T\label{varR}& \\
& \textrm{transmission constraints}& t \in T. \label{transcons}
\end{align}

The objective function is composed of operational costs due to the quadratic power generations costs \eqref{CoperP_noCommitmentCost} and commitment costs \eqref{CoperP_CommitmentCost}, and of start-up costs \eqref{Carr0}, \eqref{Carr}; in the actual implementation, the maximization 
operators in the start-up costs are equivalently expressed via artificial variables subject to linear constraints.
Constraints \eqref{LoadBalance} ensure that the active power generated matches the demand. 
Constraints \eqref{UpReserve}, \eqref{MinUpReserve} express the limits on spinning up-reserve created by generators, 
while \eqref{DownReserve} and \eqref{MinDownReserve} account for the down-reserve. 
Constraints \eqref{Pmin}-\eqref{Ppercentage} impose the bounds on the active power generated, where the upper bound may be required to be refined to a percentage of on the aggregate load. 
The so-called \enquote{ramping} constraints \eqref{Rampa_Sub_1} and \eqref{Rampa_Sub} ensure that, for each generator, 
the increase of active power in consecutive periods does not exceed the threshold $u_k^{\textrm{RampUp}}$. 
Constraints \ref{MinUON} impose that at least $\underline{U^+}$ generators are on in each time period. 
Constraints \eqref{MinOff1}, \eqref{MinOff2}, \eqref{MinOff} enforce requirements on the minimum number of hours of inactivity of generators,
while \eqref{MinOn1}, \eqref{MinOn2}, \eqref{MinOn} take the minimum number of periods of activity into account. 
Constraints \eqref{varPu}, \eqref{varR} bound the domain of the decision variables.

The transmission constraints \eqref{transcons} can be stated as:
\begin{align}
& P_k^{g, t} = \text{tr}(Y_kx^t(x^t)^\top)+P_k^{d, t} \label{defPgTrans}\\
&P_k^{\min, t} \leq \text{tr}(Y_kx^t(x^t)^\top)+P_k^{d, t} \leq P_k^{\max, t} \label{boundPg}\\
& Q_k^{\min, t}\leq \text{tr}(\bar{Y}_kx^t(x^t)^\top)+Q_k^{d, t}\leq Q_k^{\max, t} \label{boundQg}\\
&(V_k^{\min})^2 \leq \text{tr}(M_kx^t(x^t)^\top) \leq  (V_k^{\max})^2 \label{boundV}\\
&(P_{lm}^t)^2+(Q_{lm}^t)^2 \leq (S_{lm}^{\max})^2 \label{boundSlm}\\
& P_{lm}^t = \text{tr}(Y_{lm}x^t(x^t)^\top) \label{Plm}\\
& Q_{lm}^t = \text{tr}(\bar{Y}_{lm}x^t(x^t)^\top), \label{defPgTransLast}
\end{align}
where $^\top$ denotes the transpose of a vector.
This formulation can be seen as an extension of $OP_2$ in \cite{GhaMM16},
which can be relaxed to [OP$_2$-H$_1$]$^*$ of \cite{GhaMM16}, which
is equivalent with Optimization 4 in \cite{lavaei2012zero}.
We refer to \cite{6815671} for a discussion.

In our extension of [OP$_2$-H$_1$]$^*$, the SDP involves matricial decision variables:
\begin{align}
A^t \in \mathbb{R}^{2|N| \times 2|N|} & \quad t \in T \notag\\
B^{t}(b) \in \mathbb{R}^{3 \times 3} & \quad b \in \{1, \dots, 2|L|\}, t \in T \notag\\
B_k^{t}(k) \in \mathbb{R}^{2 \times 2} & \quad k \in G, t \in T, \notag
\end{align}
which are constrained to be positive semidefinite and wherein elements of the latter are labelled as follows:
\begin{align}
B^{t}(b) & = 
\begin{bmatrix}
B_{lm}^{t, 1}(b) & B_{lm}^{t, 2}(b) & B_{lm}^{t, 3}(b)\\
B_{lm}^{t, 2}(b) & B_{lm}^{t, 4}(b) & B_{lm}^{t, 5}(b)\\
B_{lm}^{t, 3}(b) & B_{lm}^{t, 5}(b) & B_{lm}^{t, 6}(b)
\end{bmatrix} &  \notag\\
B_k^{t}(k) & = 
\begin{bmatrix}
1 & B_{k}^{t, 1}(k)\\
B_{k}^{t, 1}(k) & B_{k}^{t, 2}(k)
\end{bmatrix}, \notag
\end{align}
and non-negative multipliers $\overline{\lambda}_k^t, \underline{\lambda}_k^t, \overline{\gamma}_k^t, \underline{\gamma}_k^t, \overline{\mu}_k^t, \underline{\mu}_k^t$, $k \in G, t \in T$. 
The dimension of the SDP relaxation is hence $2|T|+2|L||T|+|G||T|$.
For period $t \in T$, the constraints are: 
\begin{align}
A^t = & \sum_{k \in N} \overline{\lambda}_k^t Y_k -\underline{\lambda}_k^t Y_k+
\sum_{k \in G} c_1^k Y_k + 2 Y_k R_k^{t, 1}(k) \sqrt{c^2_k}\nonumber\\ 
&+\sum_{k \in N} \overline{\gamma}_k^t \bar{Y}_k -\underline{\gamma}_k^t \bar{Y}_k 
+ \sum_{k \in N} \overline{\mu}_k^t M_k(k) -\underline{\mu}_k^t M_k(k)\nonumber\\
&+\sum_{b \in \{1, \dots, 2|L|\}} \left(2R_{lm}^{t, 2}(b) Y_{lm}  +2R_{lm}^{t, 3}(b) \bar{Y}_{lm}\right)\label{defA}
\end{align}
\begin{align}
A^t & \succeq 0 \label{sdp2} \\
\begin{bmatrix}
B_{lm}^{t, 1}(b) & B_{lm}^{t, 2}(b) & B_{lm}^{t, 3}(b)\\
B_{lm}^{t, 2}(b) & B_{lm}^{t, 4}(b) & B_{lm}^{t, 5}(b)\\
B_{lm}^{t, 3}(b) & B_{lm}^{t, 5}(b) & B_{lm}^{t, 6}(b)
\end{bmatrix}
& \succeq 0\\
\begin{bmatrix}
1 & B_{k}^{t, 1}(k)\\
B_{k}^{t, 1}(k) & B_{k}^{t, 2}(k)
\end{bmatrix}
& \succeq 0.\label{varBk}
\end{align}
We denote the Mixed-Integer Semidefinite Program (MISDP) composed of (\ref{CoperP_noCommitmentCost}--\ref{varR}) and (\ref{defA}--\ref{varBk}) as TCUC-SDP. 

\section{A Decomposition of TCUC-SDP}
\label{sec:Decomposition} 


Building upon a long tradition of decomposing MINLP into a MILP part known as the ``master'' problem 
and NLP part known as the ``sub-problem'' \cite{761909,1490608,1425598,4349079,5372005,7061968},
we present a principled decomposition in this spirit to solve TCUC-SDP.
The novelty of our approach lies in: (i) the formulation of the master as a MIQP, (ii) the formulation of the sub-problem as an SDP, (iii) the passing of information between the master problem and the sub-problem. From the master problem to the sub-problem, information are passed by ``construction'', i.e., by amending the parameters of the SDP relaxation. From the sub-problem to the master-problem, information are passed by ``cuts'', i.e., linear (scalar) inequalities to be included in the master problem.\\
The decomposition algorithm is outlined in Algorithm \ref{algo:decomp}. 

\begin{algorithm}[th]
	\caption{A Decomposition of TCUC-SDP} 
	\label{algo:decomp}
	\begin{algorithmic}[1]
		\STATE \label{Step1} Initialise master problem $\mathcal{M}$ with \eqref{M0}, as detailed in Section \ref{sec:master}.
		\STATE \label{Step2init} Initialise a branch-and-bound-and-cut procedure for $\mathcal{M}$.
		\WHILE{termination criteria not satisfied} %
		\STATE \label{Step2} Branch-and-bound-and-cut iteration: Apply branching decision, generate general-purpose cuts, solve a quadratic programming (QP) relaxation, seek a mixed-integer solution $(u, P)$ of $\mathcal{M}$ based on the QP relaxation, decide on further branching.
		\STATE \label{Step3} Lazy cut generation: Whenever the QP relaxation of $\mathcal{M}$ is feasible, check whether previously generated cuts 
			 \eqref{NRP-cut} are applicable. If so, add them to $\mathcal{M}$.
		\STATE \label{Step4} Test of feasibility: Whenever an improving integer-feasible solution $(u, P)$ is available, set up $|T|$ sub-problems $\mathcal{S}_t$ \eqref{defA}-\eqref{varBk}, where (i) the active generators are determined by $u$, (ii) power generation limits are adjusted as per ramping constraints \eqref{Rampa_Sub_1}-\eqref{Rampa_Sub}.
		Subsequently, analyse the feasibility of $\mathcal{S}_t, t \in T$:
		\begin{itemize}
			\item[a] If there exists a period $t$ for which $\mathcal{S}_t$ is infeasible, then a no-good cut \eqref{nogood} is added to $\mathcal{M}$ to exclude the schedule $u^t = \{u_k^t : k \in G\}$ from further considerations.
			\item[b)] If all $\mathcal{S}_t$ are feasible, then $(u, P)$ yields a valid upper bound on the optimum of MISDP (\ref{CoperP_noCommitmentCost}--\ref{varR} and \ref{defA}--\ref{varBk}): Save $(u, P)$. Add cuts \eqref{NRP-cut} to a pool, to be added to $\mathcal{M}$, when violated.  
		\end{itemize}
		\ENDWHILE
		\RETURN Information about the gap reached, and the best solution $(u, P)$ found so far, if there is one. 
	\end{algorithmic}
\end{algorithm}
We stress that the algorithm schema is only meant to convey the general idea. The actual implementation, as described in the following sections, is considerably more involved. A full account of the branch-and-bound-and-cut procedure of Steps~\ref{Step2init} and \ref{Step2}, in particular, is outside of the scope of this paper and we refer to \cite{wolsey1998integer,conforti2014integer} for a book-length treatment. 
Steps~\ref{Step3} and \ref{Step4} are performed asynchronously via call-backs
in the branch-and-bound-and-cut procedure.

In Figure \ref{fig-TCUC-models}, we summarize the  presented
formulations and their relationship with the master problem and sub-problems of the decomposition-based approach.
Let us now describe the master problem, the sub-problem, and the cuts in detail.

%

\subsection{The Master Problem}\label{sec:master}
In Steps \ref{Step1} and \ref{Step2} of the decomposition algorithm, one seeks improving unit commitment schedules within the model \eqref{CoperP_noCommitmentCost}-\eqref{varR}, considering the 
cost of the non-revenue power (transmission losses) computed in the sub-problem, 
but neglecting the transmission constraints \eqref{transcons} otherwise.

In particular, we introduce non-revenue power (NRP) variables $\theta_t \in \R, t=1,\dots,T$, which represent the costs of non-revenue power. Initially, they are element-wise non-negative, and included in the objective as $\sum_{t \in T} f(\theta_t)$, where $f$ is a lower envelope of quadratic functions 
for the per-generator costs of generating active power:
\begin{align}
\min \: & \eqref{CoperP_noCommitmentCost}+\eqref{CoperP_CommitmentCost}+\eqref{Carr0}+\eqref{Carr}+ \sumt f(\theta_t) 
 \tag{$\mathcal{M}_0$} \label{M0} \\
\stt& \eqref{LoadBalance}-\eqref{varR}, \theta_t \geq 0 \quad t \in T. \notag
\end{align}
The lower envelope computation is detailed in Appendix \ref{sec:low-envelopes} (on-line). 
Notice that the master problem \eqref{M0} is a mixed-integer quadratic programming (MIQP) problem.

Notice that within the branch-and-bound-and-cut, both general-purpose and TCUC-specific cuts are added to the master problem.
Whenever a lower bound $\Theta_t$ on the non-revenue power is available, the non-revenue power variables are bounded from below by non-revenue power cuts, detailed in Section \ref{sec:NRP-cuts}.
Whenever the unit commitment turns out to be infeasible, no-good cuts are added to the master problem, 
as described in Section \ref{no-good-cuts}.
We use $\mathcal{M}$ to denote the master problem augmented with such cuts.
%


\subsection{The Sub-Problem}\label{sec:sub-problem}

In Step \ref{Step4} of the decomposition algorithm, feasibility of the unit commitment schedule corresponding to the incumbent solution  $u$ for each period $t$ of the master problem is tested. To do so, per-period SDP relaxations \eqref{defA}-\eqref{varBk} (cf. Section \ref{sec:prob}) are constructed as follows:
\begin{itemize}
	\item Given the commitment schedule of $u$, the set of generators $G$ is restricted to the units that are on in $u$ at time $t$. The transmission lines are present among generators that are on and non-isolated buses. Commitment costs $c_k^0$ are then considered only for active generators $k$. 
	\item The demand at each bus $k$ is expressed with the active loads $P_k^{d,t}$ and reactive loads $Q_k^{d,t}$.
	\item For consecutive time periods, the ramping constraints \eqref{Rampa_Sub_1}-\eqref{Rampa_Sub} affect the sub-problem in terms of the limitations on active power generation. Specifically, given the value of the active power $P_k^{g,t}$ determined in the master problem solution, then $P_k^{\max, t+1}$ is set to $\min(P_k^{\max, t+1}, P_k^{g,t}+u_k^{\textrm{RampUp}})$. A similar adjustment is made for the starting period $T_i$ by referring to the initial power $u_k^{InitP}$.
\end{itemize}
Notice that the sub-problems are time-dependent restrictions of the TCUC-SDP to the integer solution $u$, as reported in \ref{fig-TCUC-models}.

Each feasible solution of the SDP sub-problems \eqref{defA}-\eqref{varBk}, for every $t \in T$, makes it possible to obtain an upper bound on the MISDP optimal value. In particular, the bound is the sum of:
\begin{itemize}
	\item the time-dependent version $O_t$ of the objective function of Optimization 4 in \cite{lavaei2012zero}, for every $t \in T$:
	 \begin{align*}
	O_t =& \sum_{k \in N} \underline{\lambda}_k^t(P_k^{\min, t}-P_k^{d, t}) - \sum_{k \in N} \overline{\lambda}_k^t (P_k^{\max, t}-P_k^{d, t} ) \\
	&+\sum_{k \in G} P^{d, t}_k (c^1_k+2B_k^{1, t}(k) \sqrt{c^2_k})\\
	&+\sum_{k \in G} \left(c^0_k -B_k^{t, 2}(k)\right)\\
	&+\sum_{k \in N} \underline{\gamma}_k^t(Q_k^{\min, t}-Q_k^{d, t} )-\sum_{k \in N}\overline{\gamma}_k^t( Q_k^{\max, t}-Q_k^{d, t})\\
	&+\sum_{k \in N} \underline{\mu}_k^t(V_k^{\min})^2-\sum_{k \in N} \overline{\mu}_k^t( V_k^{\max})^2\\
	&-\sum_{b \in \{1, \dots, 2|L|\}}  \left((S_{lm}^{\max})^2B_{lm}^{t, 1}(b)+B_{lm}^{t, 4}(b)+B_{lm}^{t, 6}(b) \right)\\
	\end{align*}
	\item unit-commitment related costs \eqref{CoperP_CommitmentCost}, \eqref{Carr}, \eqref{Carr0}.
\end{itemize}


\subsection{The Cuts}

We use cutting plans (valid inequalities, also known as cuts) to communicate information between the sub-problems and the master problem, as suggested in Figure \ref{fig-TCUC-models}. 
In particular, we present a principled approach to deriving such cuts without any assumptions of convexity on either the master problem or the sub-problem. 
Let us consider a sub-problem for a single period $t$ and the corresponding integral assignment.
We consider four kinds of cuts, depending on the outcome of the sub-problem. Specifically:
\begin{itemize}
\item If sub-problem with generators $G'$ being on and the remaining generators $G \setminus G'$ being off at time $t$ is infeasible,
a ``no-good'' cut is added, which prohibits the current subset of generators to be used in all periods with the same demand.
 \item Optionally, one can strengthen the ``no-good'' cut above, heuristically, by adding generators to $G'$ in the increasing order of their maximum power-output at $t$ until the assignment becomes feasible. 
\item If sub-problem with generators $G'$ being on at time $t$ leads to a feasible sub-problem with a lower bound $\Theta_t$ on the non-revenue power,
one can add a cut to force the value of the variable $\theta_t$ in the master problem, using $\Theta_t$ as a constant. 
 \item Additionally, a strengthening of the cut above is sought, where sub-problem partitions generators
  $G = G' \cup \bar G \cup U$,
  where generators $G'$ are on at time $t$, generators $\bar G$ are off, and generators $U$ are in $[0, 1]$. 
  Subsequently, one aims to minimise the non-revenue power $\Theta_t$ based on $G = G' \cup \bar G \cup U$ and derives a corresponding cut. 
\end{itemize}
When one assumes that there is ramping, one needs to consider the output of the generators
at $t - 1$ in the no-good cut, which complicates the presentation, but does not make the problem
substantially more difficult, in practice.

\section{Details of the Cuts}

\subsection{No-good Cuts}\label{no-good-cuts}

As usual in decomposition-based approaches, if a candidate solution of the master problem renders the sub-problem infeasible,
we prohibit its subsequent consideration in the master problem. This is achieved by adding the so-called ``no-good'' cuts (see, e.g., \cite{balas1972canonical}, \cite{d2010interval}) to the master.
In particular, if the candidate solution $S$ of the master problem has generators $G'$ on and the remaining generators $G \setminus G'$ off at time $t$,
and this turns out to be infeasible in the sub-problem, the following cut is added:
 \begin{align}
     \label{nogood}
      \sum_{g \in G'} u_{g}^t - \sum_{g \in G \setminus G'} u_{g}^t \le |G'| - 1.
 \end{align}
For each generator $g \in G'$ that is \emph{off} we accumulate $0$ on the left-hand side, while for each generator $g \in G \setminus G'$ that is \emph{on}, the left-hand side is decreased by $-1$. In this way, if all generators' statuses are the same as in $S$, then the left-hand side is $|G'|$, while it is strictly less in any other possible solution.
  We stress that this cut is valid only in periods with the same load as $t$,
 and when ramping constraints \eqref{Rampa_Sub} are not active in either the instance tested for infeasibility
 or the present relaxation.

\subsection{The No-good Cuts and Ramping Constraints}\label{no-good-cuts-ramping}

 One can derive a variant of the cut, which is valid independent of the activity of ramping constraints \eqref{Rampa_Sub}, which constrain 
 $P_k^{g,t}-P_k^{g,t-1}\leq u_k^{\textrm{RampUp}}$. 
 In order to apply the results of an infeasibility test derived with a particular $P_k^{g,t-1}$, one needs to test whether the
 current value of $P_k^{g,t-1}$ has the same impact on $P_k^{g,t}$. 
 This can be recast as testing whether $\max(0, P_k^{\max} - P_k^{g,t-1} - u_k^{\textrm{RampUp}})$ is the same using the 
 current value of $P_k^{g,t-1}$ and the value used in the infeasibility test.
 (Note that in case $P_k^{\max} \le P_k^{g,t-1} + u_k^{\textrm{RampUp}}$, the ramping constraints are not active.)
 
 For each distinct value $V_k^{t,d} := \max(0, P_k^{\max} - P_k^{g,t-1} - u_k^{\textrm{RampUp}})$ encountered at an incumbent solution with value $P_k^{g,t-1}$, 
 we introduce a new binary variable, where the $d$th variable for a given time-generator pair $(t, g)$, we denote $v_k^{t,d} \in [0, 1]$.
 The intended meaning is that we wish to have $v_k^{t,d}$ of 1 if an only if we counter $V_k^{t,d}$ in the relaxation.
 This can be cast as: 
 \begin{align}
     \label{VbigM1}
  P_k^{\max} - P_k^{g,t-1} - u_k^{\textrm{RampUp}} - V_k^{t,d} \le M - M v_k^{t,d} + \epsilon \\ 
 - P_k^{\max} + P_k^{g,t-1} + u_k^{\textrm{RampUp}} + V_k^{t,d} \le M - M v_k^{t,d} + \epsilon 
     \label{VbigM2}
 \end{align}
 for a sufficiently large $M \in \R$ and a sufficiently small $\epsilon \in \R$.
 This could be strengthened further using the perspective reformulation \cite{Frangioni2006}.
 Additionally, we set $0 \le \sum_d v_k^{t,d} \le 1$.
 
 Using the variable $v_k^{t,d}$, we can make the no-good cut \eqref{nogood} specific to a particular setting of $V_k^{t,d}, \forall k, d$ as:
  \begin{align}
     \label{nogood-ramping}
      \sum_{g \in G'} u_{g}^t + \sum_{g \in G} v_g^{t,d} - \sum_{g \in G} \sum_{e \not = d} v_k^{t,e} - \sum_{g \in G \setminus G'} u_{g}^t \le 2 |G'| - 1
 \end{align}
 which marks the unit commitment vector as infeasible only if $v_g^{t,d}$ is 1 at all generators $g$ for the correct $d$,
 and hence only if $P_k^{g,t-1}$ has the same value as used in the infeasibility test for all generators $g$.
 
 

\subsection{Non-
	Revenue Power (NRP) Cuts}
\label{sec:NRP-cuts}

In order to link the master problem and the sub-problem, we consider cuts in the master problem 
that force a newly introduced variable to a certain value in case of a certain unit commitment. 
To derive such a cut, notice that in \eqref{nogood}, we have an expression that takes the value 1 if and only if the candidate solution is
 generators $G'$ on and the remainder $G \setminus G'$ off:

\begin{align}
    \label{nogood-expr}
    1 - |G'| + \sum_{g \in G'} u_{g}^t - \sum_{g \in G \setminus G'} u_{g}^t = \begin{cases}
    1 & \textrm{ for candidate } S \\
    \le 0 & \textrm{ otherwise}
    \end{cases}
\end{align}

We can use the expression \eqref{nogood-expr} in an inequality forcing, in the candidate solution with a certain unit commitment, a newly-added non-negative variable $\theta_t$ to be greater than a lower bound $\Theta_t$ specific to that unit commitment:

\begin{align}
    \label{NRP-cut}
    \Theta_t (1 - |G'| + \sum_{g \in G'} u_{g}^t - \sum_{g \in G \setminus G'} u_{g}^t) \le \theta_t
\end{align}

Observe that if the expression \eqref{nogood-expr} evaluates to 0 or a negative number, the value of $\theta_t$ is not changed,
as it is bounded from below by 0 in any case.
If, however, the expression \eqref{nogood-expr} evaluates to 1, the cut \eqref{NRP-cut} forces $\theta_t$ to be at least
$\Theta_t$, i.e., the value obtained in the sub-problem.
Notice that this cut can be applied only in the same period $t$, or other periods with the same power generation as in $t$.

One option is to consider term 
\begin{align} \sumt \max \left \{ \theta_t, \sumk (c_k^2 (P^{g,t}_k)^2 + c_k^1 P^{g,t}_k) \right \} \end{align}
in the master problem, where $\theta_t$ is at least the objective $O_t$ of the SDP, for all $t \in T$. 
Instead, we consider a lower bound on the non-revenue power (NRP) and its transfer from the sub-problem to the master problem in 
non-revenue power (NRP) variables $\theta_t \in \R, t=1,\dots,T$
using cuts we refer to as ``non-revenue power cuts''.
There, we need to consider a lower envelope
 $f$ of all the quadratic objective
functions across the generators $G'$ and use $f(\theta_t)$ in the objective. 
The computation of the lower envelope amounts to a small linear program,
which we detail only in Appendix~\ref{sec:envelopes} (on-line), together with the computation of the lower bound $\Theta_t$. 
Further in Appendix~\ref{no-good-cuts-ramping} (on-line), we show how to derive a variant of the cut, which is valid independent of the activity of ramping constraints \eqref{Rampa_Sub}. 


\section{An Analysis of Convergence}
\label{sec:Convergence} 

The analysis is straightforward, if one can make two assumptions, the first of which is:

\begin{assumption}
\label{ass1}
There exists a test of feasibility of a semidefinite program (\ref{defA}--\ref{varBk}), 
computable with no error in finite time.
\end{assumption}

In theory, it is known \cite{Ramana1997} that the test is in 
NP and Co-NP simultaneously, in both the Turing machine model
as well as in the real-number model of Blum, Shub and Smale.
(Specifically, see Theorem 3.4 in  \cite{Ramana1997}. The certificates of infeasibility are polynomially short 
and the feasibility test is not NP-Complete unless NP = Co-NP.)
For practical purposes, this is also not too bad an assumption, 
considering feasibility tests correct up to the machine precision exist.
In our implementation, we consider only the machine precision.

\begin{assumption}
\label{ass2}
All integer-feasible points of the master problem (\ref{CoperP_noCommitmentCost}-\ref{varR})
can be enumerated in finite time.
\end{assumption}

In theory, this assumption is satisfied, whenever for
each assignment of the binary variables, there is a unique value of
the remaining solutions
and we can evaluate the corresponding objective function in finite time. 
This is the case for the master problem (\ref{CoperP_noCommitmentCost}-\ref{varR}), indeed.
In pratice, the IBM ILOG CPLEX routine {\tt populate}
\footnote{\url{https://www.ibm.com/support/knowledgecenter/en/SSSA5P_12.6.3/ilog.odms.cplex.help/CPLEX/UsrMan/topics/discr_optim/soln_pool/18_howTo.html}},
which we use in our implementation, is capable of the enumeration.
Then:

\begin{theorem}
Under Assumptions \ref{ass1} and \ref{ass2},
for any instance of MISDP (\ref{CoperP_noCommitmentCost}--\ref{varR} and \ref{defA}--\ref{varBk}), 
there exists a finite integer $i$, such that after $i$ calls to the test of feasibility of 
a SDP (\ref{defA}--\ref{varBk}), 
one is guaranteed to have obtained a global optimum of (\ref{CoperP_noCommitmentCost}--\ref{varR} and \ref{defA}--\ref{varBk})
or a certificate that no feasible solution exists.
\end{theorem}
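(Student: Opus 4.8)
The plan is to establish finiteness in two stages --- first bounding the number of \emph{distinct} sub-problems that can ever arise, then arguing each is resolved at most boundedly often, so that the total number of SDP feasibility tests is finite --- and then to verify that the cuts returned to the master never remove a global optimum and that, on termination, the saved incumbent matches the master's bound.

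First I would observe that by Assumption \ref{ass2} the binary part $u \in \{0,1\}^{|G|\times|T|}$ of any master solution ranges over a finite set, and for each fixed $u$ the continuous completion in the MIQP master $\mathcal{M}_0$ is determined. Because each sub-problem $\mathcal{S}_t$ is, by the construction of Section \ref{sec:sub-problem}, a restriction of TCUC-SDP determined solely by the per-period active set $G' = \{g : u_g^t = 1\}$ together with the (finitely many) ramping-adjusted power limits, the collection of distinct sub-problems that can occur is finite --- at most $|T| \cdot 2^{|G|}$ of them, and fewer once identical-demand periods are identified.

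Next I would argue that each distinct configuration is tested only boundedly often, so that the total count yields the claimed integer $i$. Every call to the SDP test in Step \ref{Step4} is triggered by an \emph{improving} integer-feasible incumbent. If $\mathcal{S}_t$ is infeasible, the no-good cut \eqref{nogood} (or its ramping-robust variant \eqref{nogood-ramping}) is added to $\mathcal{M}$, permanently excluding that per-period schedule from all subsequent relaxations, so an infeasible configuration is discovered at most once. If instead all $\mathcal{S}_t$ are feasible, the NRP cut \eqref{NRP-cut} with its genuine lower bound $\Theta_t$ is placed in the pool and lazily enforced in Step \ref{Step3}; once enforced, the master objective correctly accounts for the non-revenue power of that schedule, so the same configuration cannot reappear as a strictly improving incumbent. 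As there are finitely many configurations and each contributes only finitely many cuts, the branch-and-bound-and-cut runs over a finite MIQP with a finite cut pool and, by Assumption \ref{ass2} together with the standard finiteness of branch-and-bound, halts after finitely many SDP tests; by Assumption \ref{ass1} each such test is itself a finite, error-free operation. For correctness I would check cut validity in both directions: the no-good cut is valid because $\mathcal{S}_t$ is a restriction of TCUC-SDP, so infeasibility for active set $G'$ certifies that no MISDP-feasible point shares that per-period schedule (at the relevant demand), and \eqref{nogood} excludes exactly that schedule and nothing feasible; the NRP cut is valid because $\Theta_t$ is a true lower bound on the period-$t$ non-revenue power for $G'$ and the indicator expression \eqref{nogood-expr} activates \eqref{NRP-cut} only on that schedule, leaving $\theta_t \ge 0$ untouched elsewhere. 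Summed over $t$ through the additive term $\sum_t f(\theta_t)$, this makes the master objective a valid lower bound on the MISDP objective that is tight at the certified incumbent, so on termination either some saved incumbent has master bound equal to its SDP-certified value --- a global optimum --- or every per-period schedule has been excluded, certifying infeasibility.

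The main obstacle I expect is the second stage: making rigorous that a \emph{feasible} configuration cannot trigger unboundedly many tests. This hinges on the interaction between the lazy NRP-cut pool of Step \ref{Step3} and the branch-and-bound's notion of an improving incumbent, and on the fact that the per-period bounds $\Theta_t$ combine additively into a correct global correction; making it precise requires confirming that once all applicable NRP cuts are enforced, the master's evaluation of a revisited schedule coincides with its SDP-certified value, so no strictly improving duplicate can occur.
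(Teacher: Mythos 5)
Your core argument is the same as the paper's, which is far more terse: by Assumption \ref{ass2} there are finitely many integer points in the feasible region of the master problem, in the worst case the branch-and-bound visits each exactly once, each visit triggers at most $|T|$ feasibility tests that are exact and finite by Assumption \ref{ass1}, and one then picks the best SDP-certified solution by exhaustion. Your additional material --- bounding the number of distinct sub-problem configurations, checking that the no-good cut \eqref{nogood} excludes only the certified-infeasible per-period schedule, and checking that \eqref{NRP-cut} is activated by \eqref{nogood-expr} only on the schedule for which $\Theta_t$ was derived --- is sound and goes beyond what the paper writes down, and is a welcome addition.

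The one step that does not close is the one you yourself flagged: the claim that once the lazy NRP cuts are enforced, ``the master's evaluation of a revisited schedule coincides with its SDP-certified value,'' and hence that on termination some incumbent has master bound equal to its SDP value. This is generally false: the master charges $f(\theta_t)$ where $f$ is only a \emph{lower envelope} of the per-generator cost functions and $\Theta_t$ is only a \emph{lower bound} on the non-revenue power, so the master remains a strict relaxation of the MISDP even after all cuts are added, and the duality-style termination criterion you invoke need never be met. The non-recurrence of a feasible configuration as a strictly improving incumbent does not need this anyway --- a fixed $u$ has a fixed optimal master objective, so it cannot strictly improve on itself --- and global optimality should be concluded, as the paper does, from exhaustion of the finite set of integer points rather than from bound matching. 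With that substitution your argument is complete and matches the paper's.
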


\begin{proof}
Notice that there exists a finite number of integer points in the feasible region of the master problem (\ref{CoperP_noCommitmentCost}-\ref{varR}).
In the worst case, i.e., with both the quadratic relaxation of the master problem and the SDP relaxation in the sub-problem 
having the same objective function value at all nodes of the search tree, 
we would have to visit each node of the search tree, and hence each integer point in the feasible region of the master problem (\ref{CoperP_noCommitmentCost}-\ref{varR})
precisely once. 
Once we do so, we can pick the optimum.
Considering the number of nodes to visit is finite, 
the number of calls to the feasibility test is also finite,
and the overall run-time is also finite.
\end{proof}

Without the Assumption \ref{ass1}, which could be seen as too restrictive, 
we could consider:

\begin{assumption}
\label{ass3}
Each SDP (\ref{defA}--\ref{varBk}) considered in the test of feasibility are either infeasible or 
there exists a feasible ball of radius at least exp(-poly($n,m,L$)) where $n$ 
 is the dimension, 
$m$ is the number of constraints, and $L$ is the input size, inside the relaxation.
\end{assumption}

which could be seen as a certain measure of robustness of the solution of the SDP.
Then, again:

\begin{theorem}
Under Assumptions \ref{ass2} and \ref{ass3},
for any instance of MISDP (\ref{CoperP_noCommitmentCost}--\ref{varR} and \ref{defA}--\ref{varBk}), there exists a finite integer $j$, 
such that after $j$ calls to the test of feasibility of an SDP (\ref{defA}--\ref{varBk}),
one is guaranteed to have obtained a global optimum of (\ref{CoperP_noCommitmentCost}--\ref{varR} and \ref{defA}--\ref{varBk})
or a certificate that no feasible solution exists.
\end{theorem}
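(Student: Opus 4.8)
The plan is to reduce this statement to the previous theorem by showing that Assumption~\ref{ass3} supplies, constructively, the finite-time exact feasibility test that Assumption~\ref{ass1} postulated outright. Once that reduction is in hand, the counting argument of the previous proof carries over unchanged: Assumption~\ref{ass2} guarantees that the master problem (\ref{CoperP_noCommitmentCost}--\ref{varR}) has finitely many integer-feasible points, each of which is visited at most once during the enumeration, and each visit triggers exactly one feasibility test of the per-period SDPs (\ref{defA}--\ref{varBk}). Hence the number $j$ of calls is bounded by the finite number of such points, and upon termination one selects the best feasible incumbent or reports infeasibility.

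The substance therefore lies in establishing that, under Assumption~\ref{ass3}, feasibility of each SDP (\ref{defA}--\ref{varBk}) can be decided with no error in finitely many operations. First I would intersect the feasible region with an explicit box on the multipliers $\overline{\lambda}_k^t,\underline{\lambda}_k^t,\dots$ and on the entries of $A^t$, $B^t(b)$, $B_k^t(k)$; this does not alter feasibility and yields a bounding ball of radius $R$ with $\log R = \mathrm{poly}(n,m,L)$. I would then run the ellipsoid method on the resulting spectrahedron, using as separation oracle an eigenvalue computation for each positive-semidefinite block: a violated constraint appears as a negative eigenvalue, and its eigenvector furnishes a separating hyperplane, while the affine relation (\ref{defA}) and the nonnegativity constraints are separated trivially. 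The dichotomy in Assumption~\ref{ass3}---the SDP is either infeasible or contains a ball of radius $r \ge \exp(-\mathrm{poly}(n,m,L))$---is precisely what the ellipsoid method requires: after $O(\mathrm{poly}(n,m,L)\cdot\log(R/r)) = \mathrm{poly}(n,m,L)$ iterations it has either produced a feasible point or shrunk its enclosing ellipsoid below the volume of any radius-$r$ ball, and in the latter case Assumption~\ref{ass3} forces infeasibility. Thus the test terminates in finite time and is never wrong, which is exactly the content of Assumption~\ref{ass1}.

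I expect the main obstacle to be the numerical bookkeeping of this last step: the eigenvalue computations and the arithmetic of each ellipsoid update must be carried out to enough precision that the sign of the smallest eigenvalue, and hence the verdict of the separation oracle, is certified correctly. The robustness margin of Assumption~\ref{ass3} is exactly what makes a finite precision suffice, but matching the required number of bits to the $\exp(-\mathrm{poly})$ radius bound---and confirming that each iteration remains a finite, exact-enough computation in the real-number model of Blum, Shub and Smale used for the infeasibility certificates of \cite{Ramana1997}---is the delicate part. Once this is discharged, Assumption~\ref{ass3} yields a finite-time correct feasibility oracle, and the conclusion follows exactly as in the preceding theorem.
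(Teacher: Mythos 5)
The paper states this theorem without any proof at all: after introducing Assumption~\ref{ass3} it simply restates the conclusion, evidently intending the reader to see that Assumption~\ref{ass3} stands in for Assumption~\ref{ass1} and that the enumeration argument of the first theorem then goes through verbatim. Your proposal makes exactly this intended reduction explicit, and the counting part is fine (up to the cosmetic point that each incumbent triggers $|T|$ per-period tests, so $j$ is bounded by $|T|$ times the number of integer-feasible points rather than by that number itself). In that sense you have supplied more than the paper does.

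The one place where your argument has a genuine gap is the very first step of the ellipsoid construction: you assert that one can intersect the feasible region of \eqref{defA}--\eqref{varBk} with an explicit box of radius $R$ with $\log R = \mathrm{poly}(n,m,L)$ ``without altering feasibility.'' That is not automatic for semidefinite programs: a feasible spectrahedron (even one containing a ball of radius $r$) may live entirely at points whose norm is doubly exponential in the input size, so truncating to a polynomially bounded box can turn a feasible instance into an infeasible one. The ellipsoid method needs \emph{both} an outer radius $R$ and the inner radius $r$; Assumption~\ref{ass3} supplies only $r$. To close this you would need either an additional boundedness hypothesis (the usual companion to the inner-ball assumption in the Gr\"otschel--Lov\'asz--Schrijver framework) or a problem-specific argument that the multipliers $\overline{\lambda}_k^t, \underline{\lambda}_k^t, \dots$ and the entries of $A^t$, $B^t(b)$, $B_k^t(k)$ in \eqref{defA}--\eqref{varBk} can be a priori bounded by quantities of polynomial bit length. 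The remainder of your oracle construction (eigenvector separating hyperplanes, volume argument, precision bookkeeping against the $\exp(-\mathrm{poly})$ margin) is standard and sound once that outer bound is in place.
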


Further, one could study the conditions, which have to be satisfied in order for the feasible region of (\ref{CoperP_noCommitmentCost}--\ref{varR} and \ref{defA}--\ref{varBk}) 
to coincide with the feasible region of (\ref{CoperP_noCommitmentCost}--\ref{varR} and \ref{defPgTrans}--\ref{defPgTransLast}). Clearly:

\begin{proposition}
Whenever the feasible region of the SDP relaxation (\ref{defA}--\ref{varBk}) 
at each node of the search tree coincides with the POP (\ref{defPgTrans}--\ref{defPgTransLast}) at that node of the search tree,
the feasible region of the MISDP (\ref{CoperP_noCommitmentCost}--\ref{varR} and \ref{defA}--\ref{varBk})
coincides with the feasible region of the MIPOP (\ref{CoperP_noCommitmentCost}--\ref{varR} and \ref{defPgTrans}--\ref{defPgTransLast}).
\end{proposition}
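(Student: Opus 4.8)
The plan is to exploit the fact that the two feasible regions in question share an identical combinatorial skeleton and differ only in how the transmission constraints are expressed, so that establishing the claim reduces to gluing together the node-wise coincidences asserted in the hypothesis. First I would make the mixed-integer decomposition explicit: both the MISDP (\ref{CoperP_noCommitmentCost}--\ref{varR} and \ref{defA}--\ref{varBk}) and the MIPOP (\ref{CoperP_noCommitmentCost}--\ref{varR} and \ref{defPgTrans}--\ref{defPgTransLast}) contain the very same unit-commitment constraints (\ref{CoperP_noCommitmentCost}--\ref{varR}), including the integrality $u_k^t \in \{0,1\}$ imposed by \eqref{varPu}. Branching is carried out on the binary variables $u$ alone, so the leaves of the complete search tree correspond to the admissible complete integer assignments $\bar u$; since the branching dichotomies $u_k^t = 0$ versus $u_k^t = 1$ are complementary and exhaustive, the leaves partition the integer-feasible set of either problem. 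I would therefore write each feasible region as the finite disjoint union, over admissible $\bar u$, of the slices obtained by fixing $u = \bar u$.

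Next I would examine a single leaf. With $u = \bar u$ substituted, the remaining feasibility condition splits into two parts: (i) the continuous portion of the shared constraints (\ref{LoadBalance}--\ref{varR}), which is literally the same system in both formulations and which constrains the linking variables $P_k^{g,t}$, $R_\pm^t$ identically; and (ii) the transmission constraints, rendered as the SDP \eqref{defA}--\eqref{varBk} in the MISDP and as the POP \eqref{defPgTrans}--\eqref{defPgTransLast} in the MIPOP. The hypothesis of the proposition is precisely that, at this node of the search tree, the feasible set carved out by \eqref{defA}--\eqref{varBk} coincides with that carved out by \eqref{defPgTrans}--\eqref{defPgTransLast}. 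Intersecting two equal sets with one and the same shared system (i) yields equal sets, so the leaf-slice of the MISDP equals the leaf-slice of the MIPOP.

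Finally I would assemble the conclusion by taking the union over the leaves: because the leaves exhaust the integer-feasible set, equality of every leaf-slice forces the feasible region of the MISDP to coincide with that of the MIPOP, as claimed. The one point that genuinely requires care---and which I expect to be the main obstacle---is the interpretation of ``coincides'' across the two ambient variable spaces. The SDP carries auxiliary matrix variables $A^t$, $B^t(b)$, $B_k^t(k)$ and the multipliers $\overline{\lambda}_k^t, \underline{\lambda}_k^t, \overline{\gamma}_k^t, \underline{\gamma}_k^t, \overline{\mu}_k^t, \underline{\mu}_k^t$ that do not appear in the POP, so the coincidence must be read in the sense of equality of projections onto the common physical decision variables (the voltages $x^t$, the power flows $P_{lm}^t, Q_{lm}^t$, and the generator powers $P_k^{g,t}$). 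Once this is fixed and the coincidence in the hypothesis is taken in this projected sense, no further analysis of the transmission physics is needed: the argument is a pure partition-and-glue over the finitely many search-tree leaves, which is exactly why the statement follows directly from the node-wise assumption.
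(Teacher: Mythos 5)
Your argument is correct and is exactly the partition-over-integer-assignments reasoning that the paper leaves implicit: the proposition is asserted with only the word ``Clearly'' and no written proof, so your leaf-slice decomposition plus the node-wise hypothesis is the natural and complete way to supply it. Your additional observation that ``coincides'' must be read as equality of projections onto the shared physical variables (since \eqref{defA}--\eqref{varBk} carries auxiliary matrix variables and multipliers absent from \eqref{defPgTrans}--\eqref{defPgTransLast}) is a genuine and necessary clarification that the paper does not make explicit.
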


This is the case, for example, if the power-system is radial and homogeneous (cf. Theorem 3 in \cite{6624135} and Theorem 1 in \cite{dymarsky2018convexity}), 
or if there are enough phase-shifters (cf. Theorem 4 of Sojoudi and Lavaei \cite{6345272}).
See \cite{6815671} for further references. 
Clearly, these conditions are invariant throughout the nodes of the search tree, 
and hence if they are satisfied by the SDP at one node of the branch-and-bound 
search tree, they are satisfied at all nodes. 
Alternatively, one could assume that whenever the value of the POP (\ref{defPgTrans}--\ref{defPgTransLast})
for one input is higher than at another, then the value of the SDP (\ref{defA}--\ref{varBk}) 
at that input is higher than at the other. 
This assumption seems much weaker, albeit perhaps more difficult to work with. 

\section{Computational Illustrations}
\label{sec:Implementation}

We have conducted a numerical evaluation of the decomposition on instances
of Red Electrica de Espa{\~n}a. 
The input to the master problem comes in the
form of a MILP model for the unit commitment problem without transmission constraints. 
The input
to the sub-problem comes in the Common Information
Model (CIM, IEC 61970) serialisation in Extensible Markup
Language (XML), which is used by the European Network
of Transmission System Operators (ENTSO-E) to exchange
operational data.
The branch-and-bound-and-cut is based on IBM
ILOG CPLEX 12.7. Whenever IBM ILOG CPLEX obtains a
new QP relaxation, a call-back checks whether any cuts are
applicable, using the previously stored information about the
outcomes of the sub-problems. Notice that the cuts \eqref{NRP-cut} are
derived for each period independently, and hence cannot be
particularly dense, but the addition of a excessive amount of
cuts would still affect the performance negatively. Whenever
IBM ILOG CPLEX encounters an improving integer-feasible
solution while solving the master problem, another call-back
constructs the $|T|$  sub-problems, solves them, and stores
information about the outcomes. To solve these sub-problems, we used a
custom interior-point method (IPM) for solving the resulting
SDP relaxations.

For illustration purpose, we consider an instance capturing a portion of the Canary Islands transmission system, as illustrated in Figure \ref{canary-network}. 
The instance, which we denote \textit{REE}, features $45$ buses, of which $31$ are generators, 
and $48$ branches, of which $4$ are double circuits. 
There are $|T| = 24$ time-periods, corresponding to 24 hours in a representative day 
of operations in January 2017, when the loads have been recorded. 

	\begin{figure}[h!]
	\begin{center} 
		\includegraphics[width = 0.7\textwidth, height = 5cm, keepaspectratio]{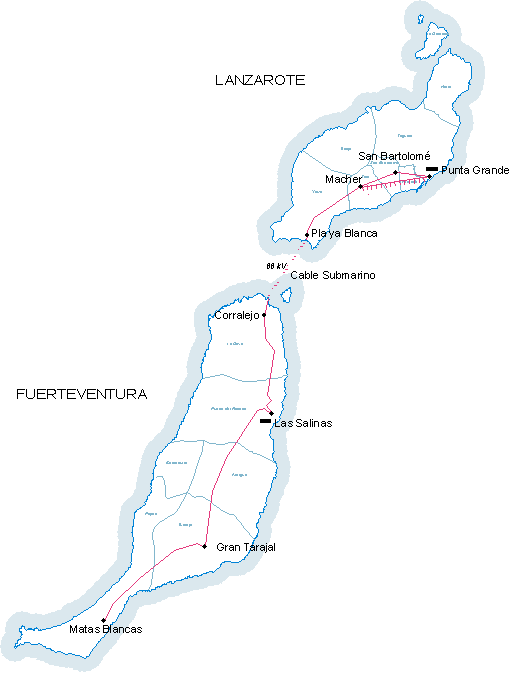}
		\caption{A portion of the Canary Islands transmission system.}
		\label{canary-network}
	\end{center}
\end{figure}
In evaluating the performance of our approach on an instance, one may consider:
\begin{itemize}
	\item Lower and upper bounds on the optimum of the master problem (\ref{CoperP_noCommitmentCost}-\ref{varR}). Since the master problem is a relaxation of the MISDP (\ref{CoperP_noCommitmentCost}--\ref{varR} and \ref{defA}--\ref{varBk}), any lower bound for the master problem is a valid lower bound for the MISDP.
	\item The so-called \enquote{sub-problem bound} sums objective function values of per-period SDPs (\ref{defA}--\ref{varBk}) across all time periods, alongside the commitment-related sunk costs \eqref{CoperP_CommitmentCost}, \eqref{Carr0}, \eqref{Carr}. 
	 The sub-problem bound is an upper bound on the MISDP (\ref{CoperP_noCommitmentCost}--\ref{varR} and \ref{defA}--\ref{varBk}) optimal value.
\end{itemize}


A sample evolution of bounds obtained for the \textit{REE} instance by running the implementation 
on a standard laptop (Lenovo ThinkPad X1 Carbon) without any explicit paralellisation
 is presented in Figure~\ref{fig:convergence}. 
Notice that the first schedule feasible with respect to transmission constraints (TC) is found within $600$ seconds of computation, and $7$ solutions of progressively better quality are found within $4800$ seconds. The TC-feasibility has been confirmed by an independent run of a power-flow software utilised by Red El{\' e}ctrica de Espa\~na. 
It should be noted that the bulk of the run-time is down to solving $192$ per-period SDPs (\ref{defA}-\ref{varBk}), which takes $20-30$ seconds per SDP. Since the SDP sub-problems are formulated based on the master problem solution, they could be solved in parallel, which would speed-up the implementation considerably.
In Figure~\ref{fig:master}, the master problem convergence bounds are reported for the same run of the decomposition approach. While the lower bound in the master problem does not improve particularly fast, a gap of $3.83\%$ in the master problem is reached within $1677$ seconds. 


\begin{figure}[h!]
\begin{center}
	\includegraphics[width = 0.5\textwidth]{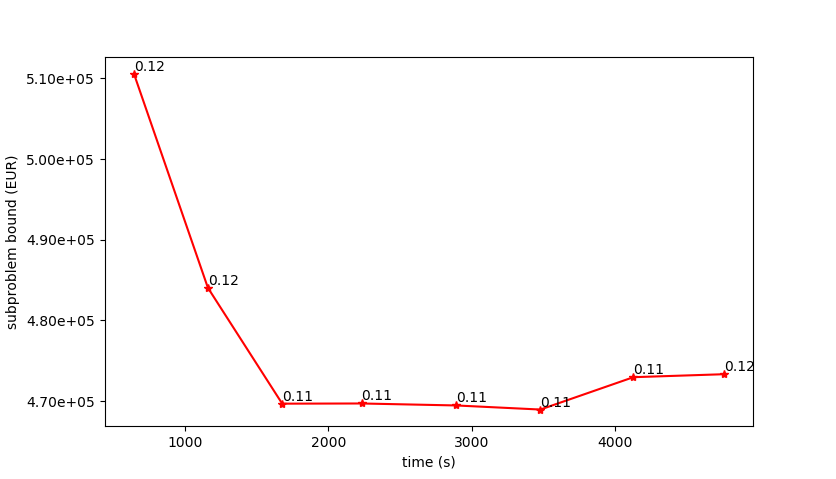}
\end{center}
	\caption{The performance of the decomposition approach on the \emph{REE} instance. On the vertical axes, we plot values obtained in the sub-problem. The additional annotations suggest the primal-dual gaps. 
}
	\label{fig:convergence}
\end{figure}

\begin{figure}[h!]
	\begin{center} 
		\includegraphics[width = 0.5\textwidth]{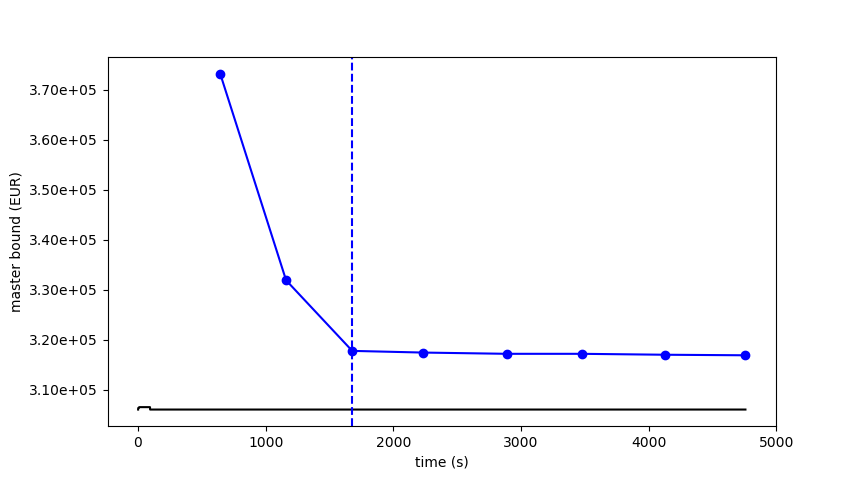}
	\end{center}
	\caption{Lower and upper bounds on the master problem found using the decomposition approach.}
\label{fig:master}
\end{figure}

\section{Conclusions and Future Work}

The relevance of alternating-current transmission constraints in unit commitment
increases with the introduction of intermittent renewable energy sources and decreasing market
depth in traditional energy sources.  
Based on a mixed-integer SDP formulation, we have presented a decomposition approach,
which interleaves solving a master problem and sub-problems, 
passing information between them in the form of no-good cuts and non-revenue power cuts, in one way, 
and limits on generation in the sub-problems, the other way.
For the first time, we have analysed convergence of such an approach with non-linear sub-problems. 
On an instance from the Canary Islands, our implementation runs  
within realistic time-frames even on a laptop.

We hope that this may spur a considerable amount of further research.
First, the master problem could be tightened.
While many of the techniques developed for the unit commitment without transmission constraints \cite{4682641,5983423,6407169,7798349,8275570}
are applicable, one could also consider the Karush--Kuhn--Tucker (KKT) conditions of the sub-problem \cite{Burer2008} in the master problem,
 or even constraints valid only at the rank-1 solutions \cite{Zheng2011} of the sub-problem. 
One may also aim to control the number of commitment schedules considered, e.g.,
 by adjusting the branch-and-bound-and-cut procedure\footnote{In IBM ILOG CPLEX, the relevant parameters include {\tt CPX\_MIPEMPHASIS\_FEASIBILITY}.}, 
 or considering only some of the candidate solutions encountered in the master problem.
Finally, in order to speed up the implementation, 
 further work could focus on parallel and distributed solvers for the sub-problem,
 beyond solving the $|T|$ SDPs in parallel, when no ramping constraints are active. 
Eventually, this could make clearing continental-scale markets under transmission constraints possible.


\section*{Acknowledgment}
\addcontentsline{toc}{section}{Acknowledgment}

We would like to thank 
Jos{\' e} Javier P{\' e}rez Gonz{\' a}lez,
Marta Quintana de Juan,  
Irene Paramio Lorente, 
Laura Ganzabal Fern{\' a}ndez, 
and Francisco Jos{\' e} Riola Hernang{\' o}mez
for their support during this collaboration.

\bibliographystyle{ieeetran}
\bibliography{../tcuc,../enclosing}

\clearpage
\appendix
\section{Additional Material}

\subsection{Strengthening of No-good Cuts}\label{no-good-cuts-strength} 

A strengthening of the ``no-good'' cut above is sought, heuristically, by moving generators from $G \setminus G'$ to $G'$ in the 
increasing order of the maximum power-output of $g \in G \setminus G'$ at $t$, until (and not including) the $G'$ 
makes the sub-problem becomes feasible.
Although it seems hard to provide theoretical guarantees as to the performance, this seems to be surprisingly successful.

A further strengthening 
in the case of ramping constraints 
can be obtained, when one moves from linear cuts to second-order cuts using the so called
perspective reformulation \cite{Frangioni2006}. There, good performance guarantees can be obtained.

\subsection{The Non-Revenue Power Cuts and Ramping Constraints}

As in Section~\ref{no-good-cuts-ramping}, one can derive a variant of the cut, which is valid independent of the activity of ramping constraints \eqref{Rampa_Sub}. 
Using the same value $V_k^{t,d} := \max(0, P_k^{\max} - P_k^{g,t-1} - u_k^{\textrm{RampUp}})$, variable $v_k^{t,d} \in [0, 1]$,
and constraints (\ref{VbigM1}--\ref{VbigM2}), we can extend the NRP cut \eqref{NRP-cut}
specific to a particular setting of $V_k^{t,d}, \forall k, d$ as:
  \begin{align}
      \Theta_t (1 - 2 |G'| + \sum_{g \in G'} u_{g}^t + \sum_{g \in G} v_g^{t,d} - \notag\\
      - \sum_{g \in G \setminus G'} u_{g}^t - \sum_{g \in G} \sum_{e \not = d} v_k^{t,e} ) \le \theta_t           \label{NRP-ramping}
 \end{align}
 which pushes up the value of $\theta_t$ only if $v_g^{t,d}$ is 1 at all generators $g$ for the correct $d$,
 and hence only if $P_k^{g,t-1}$ has the same value as used in the infeasibility test for all generators $g$.

\subsection{Strengthening of Non-Revenue Power Cuts}\label{NRP-cuts-strength} 

 Just as in the case of no-good cuts, one may try to add generators to $G'$, in the NRP cuts, one
 may try removing them from $G'$. If we amend the sub-problem to accept a partition of the generators
  $G = G' \cup \bar G \cup U$,
  where generators $G'$ are on at time $t$, generators $\bar G$ are off, and generators $U$ are in $[0, 1]$,
  one obtains a lower bound $\Theta_t$ on the non-revenue power at time $t$.
 With NRP $_t$ based on $G = G' \cup \bar G \cup U$, one can apply the cut:
   \begin{align}
\label{NRP-cut2}
   \Theta_t (1 - |G'| + \sum_{g \in G'} (u_{g}^t + \sum_{g \in \bar G} (1-u_{g}^t)) \le \theta_t.
   \end{align}
  Note that one may derive the set $U$, heuristically, by considering the  commitment of unit and time-period pairs, where the minimum and maximum
 on and off time constraints are not active, for instance. 
Again, a further strengthening 
can be obtained using the 
perspective reformulation \cite{Frangioni2006}. 

\subsection{Per-generator Non-Revenue Power Cuts}

One could also use the expression \eqref{nogood-expr} to bound per-generator non-revenue power from below, in theory.
Let us have a variable $l_{g,t}$ for NRP at $g \in G'$, period $t$ in the master problem,
and a lower bound thereupon obtained from the sub-problem. Then:

\begin{align}
    \label{NRP-cut3}
    \Theta_{g,t} (1 - |G'| + \sum_{g \in G'} u_{g}^t - \sum_{g \in G \setminus G'} u_{g}^t) \le \theta_{g,t}
\end{align}

extends \eqref{NRP-cut} to this setting.
That is: if the expression \eqref{nogood-expr} evaluates to 0 or a negative number, the value of $\theta_{g,t}$ is not changed,
as it is bounded from below by 0 in any case.
If, however, the expression \eqref{nogood-expr} evaluates to one, the cut \eqref{NRP-cut3} forces $\theta_{g,t}$ in the master problem
to be at least $\Theta_{g,t}$, i.e., the value obtained in the sub-problem.
Notice that this cut can be applied only in the same period $t$, or other periods with the same power generation as in $t$.\\
We stress that at the moment, we have no proof that the 
per-generator NRP cuts,
except the intuition that the choices of powers in the relaxation \emph{together} form a lower bound.
By default, the per-generator NRP cuts are not enabled in our implementation.

\section{Lower and Upper Envelopes of the Quadratic Costs}
\label{sec:envelopes}

Notice that the approach talks about the lower bounds on the non-revenue power incurred or
on the active power generation at individual generators, neither of which is generally
easy to obtain in non-linear programming.
While the objective of the SDP relaxation is a valid lower bound
on the objective of the quadratically constrained quadratic program 
\cite{Fujie1997,Nesterov2000}, 
there is no way of lower bounding individual variables.
 
Crucially, we observe, however, that the objective function of TCUC-SDP is a sum of quadratic functions, whose range are non-negative real numbers.
In the following subsections, we show how to approximate both the lower and upper envelope of the quadratic functions to arbitrary precision by running a small linear programming problem.
We can then use: (i) the upper envelope to compute a lower bound on the NRP and (ii) the lower envelope to include an estimation of the NRP in the objective of the master problem.

\subsection{Lower Envelopes}
\label{sec:low-envelopes}

As has been explained in Section~\ref{sec:NRP-cuts},
in the cost function of the master problem, we include a lower bound on the cost of the non-revenue power.
In the simplest case, this can take the form of 
 $\sum_{t = 1}^T f(l_t)$, where $f$ is a lower envelope of quadratic functions 
for the per-generator costs of generating active power and $l_t$ is a variable bound from 
below by a lower bound on the
 amount of active power lost in transmission at time $t$.
 
We compute the tightest quadratic lower envelope $f$ of the cost functions
$C_g (x) := c_g^2 x^2+c_g^1 x+c_g^0$ in terms of active power $x \in \R$ at all generators $g \in G$, 
 which has the form $f(x)=c x^2 + b x + a$, $x \in \R$.
One can compute an arbitrarily good approximation of a lower envelope by considering 
 $f$ such that $f(x) \leq C_g(x), \forall x \in X, g \in G$, where 
 $X$ is a discretization $X=\{x_1, \dots, x_p\}$ of the set of possible values of active power. 
Out of all such lower envelopes $f(x)$, we are interested the tightest lower envelope, 
hence we maximize the values of $f(x)$ summed across all points in $X$. This amounts
to solving the following linear problem in dimension 3 for the 3 coefficients $a,b,c$:
\begin{align}
\displaystyle\max_{a,b,c \in \R}\ & \displaystyle\sum_{x \in P} \left( c x^2 + b x + a \right)   &\notag\\
\stt & c x^2  + b x + a \leq \displaystyle\min_{g \in G} (c_g^2 x^2+c_g^1 x + c_g^0)  & \forall x \in X \notag\\
& a,b,c \geq 0 & \notag
\end{align}

%

\subsection{Upper Envelopes}
\label{sec:up-envelopes}
The lower bound on the total cost in the SDP relaxation obtained by means of the lower envelopes may be overly conservative in cases where one $g$ generator is much more expensive than others, but the upper limit $P_g^{\max}$ on the generation of active power at $g$ is a small fraction of the total demand.\\
A tighter lower bound on the non-revenue power could be obtained using an
upper envelope of the quadratic functions 
for the per-generator costs of generating active power. 
Specifically, this is achieved by considering the generators in the decreasing order of their costs for the maximum 
active power generation, decrementing a bound on the costs (e.g., the objective of the sub-problem) by the cost of the generation
at the maximum active power generation, and incrementing the accounted-for active power, until the accounted-for active power matches 
the total power demand.
The remainder of the (bound on the) costs are associated with the NRP.  
To compute the amount of NRP, one can once again work with the remaining generators, 
in the decreasing order of their costs for the maximum 
active power generation.
Algorithm \ref{lowerbound} presents the pseudo-code.

\begin{algorithm}[H]
\caption{Computation of a Lower Bound on the Non-Revenue Power} 
\label{lowerbound}
\begin{algorithmic}[1]
\REQUIRE Objective $O_t$ of the SDP relaxation at time $t$, aggregate power demand $P^{d,t}$ at time $t$,
 cost functions $C_g^t$, limits $P_g^{\max, t}$
\STATE{Sort generators $g \in G$ in the decreasing order of $C_g(P_g^{\max})/P_g^{\max}$}
\STATE{Initialise $g$ to be the first generator in this order, costs $O \from O_t$, active power $\widetilde{P} \from P^{d,t}$, a lower bound on the NRP $L \from 0$}
\WHILE{ $O > 0$ and $\widetilde{P} > 0$ }
\STATE $\widetilde{P} \from \widetilde{P} - \min(\widetilde{P}, P_g^{\max})$
\STATE $O \from O -  \min( C_g(P_g^{\max}), C_g(\widetilde{P}) )$
\STATE $g \from$ succ$(g)$ in the ordered $G$, if defined
\ENDWHILE
\WHILE{ $O > 0$ }
\STATE $L \from L + \min(P_g^{\max}, \arg \max_x C_g(x)$ s.t. $C_g(x) = O)$
\STATE $O \from O -  \min( C_g(P_g^{\max}), O )$
\STATE $g \from$ succ$(g)$ in the ordered $G$, if defined
\ENDWHILE
\RETURN L
\end{algorithmic}
\end{algorithm}

Notice that in the case where the cost functions are similar, both the upper envelope and this procedure should give a very good bound. 
This procedure produces a useful bound, even if the 
cost functions vary, as long as there are reasonably tight bounds on the active-power output of each generator.

Due to its construction, the objective function of the SDP relaxation represents the costs related to active power generation. Therefore, an upper bound for the cost function of the Master problem is considered using feasible SDP instances.

\end{document}